\documentclass[a4paper,11pt,reqno]{amsart}
\usepackage{amsmath, amssymb, amsthm}
\usepackage{fullpage, hyperref}

\newcommand{\ZZ}{\mathbb{Z}}
\newcommand{\QQ}{\mathbb{Q}}
\newcommand{\RR}{\mathbb{R}}
\DeclareMathOperator{\sgn}{sgn}
\newcommand{\ipos}[1]{\sgn^+{(#1)}}

\newtheorem{thm}{Theorem}
\newtheorem{lemma}[thm]{Lemma}
\newtheorem{ques}[thm]{Question}
\newtheorem{corr}[thm]{Corollary}
\theoremstyle{remark}

\title{On the Open Problem in the 2023 Simon Marais Mathematics Competition}
\author[A.\ Pisani]{Anthony Pisani}
\address{School of Mathematics, Monash University, Clayton, Australia}
\email{anthony.pisani1@monash.edu}
\date{\today}

\begin{document}
	\begin{abstract}
		We prove that, for $n \in \ZZ^+$ not a perfect square,
		the only pairs of positive integers $\{a, b\}$
		for which there exists $r \in \RR$ such that $r^a + \sqrt{n}$
		and $r^b + \sqrt{n}$ are both rational numbers are those of
		the form $\{a, a\}$ or $\{g, 2g\}$ (with $g$ odd). This resolves
		an open question presented
		in the 2023 Simon Marais Mathematics Competition.
	\end{abstract}
	
	\maketitle

	\section{Introduction}\label{s:intro}

	The Simon Marais Mathematics Competition is
	an annual undergraduate mathematics competition, established in 2017
	(\cite{smmc}). In an unusual tradition for the format,
	one of the questions presented to participants is an open problem:
	full marks are awarded for the (known) solution to a given special case,
	with bonus marks for further progress. To the author's knowledge,
	none of the open problems set in the nine-year history of
	the competition have previously been resolved. \\

	The open problem on the 2023 competition asks the following:

	\begin{ques}[{\cite[Question B4(b)]{paper}}] \label{q:main}
		Let $n$ be a positive integer that is not a perfect square.
		Find all pairs $(a, b)$ of positive integers for which
		there exists a real number $r$, such that
		\[ r^a + \sqrt{n} \qquad \textrm{and} \qquad r^b + \sqrt{n} \]
		are both rational numbers.
	\end{ques}

	In the case where $r > 0$, which is highlighted
	in the competition paper, the official solutions \cite{sols} prove that
	the pairs sought in Question~\ref{q:main} are precisely those such that
	$a = b$. It is furthermore shown that, in general, distinct $a, b$ must
	have opposite parity, and that the valid pairs in which $a$ or $b$ is 1
	are exactly $\{ (1, 1), (1, 2), (2, 1) \}$. \\

	This paper aims to prove that the complete answer is
	as stated below.
	
	\begin{thm}\label{thm:main}
	The pairs $(a, b)$ sought in Question~\ref{q:main} are precisely
	those such that $a = b$ or
	$\{ a, b \} = \{ g, 2g \}$ for some odd positive integer $g$.
	\end{thm}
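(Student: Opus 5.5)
The sufficiency direction is a direct construction. If $a=b$, pick a rational $k>\sqrt n$ and let $r=(k-\sqrt n)^{1/a}$, the positive real root. For $\{a,b\}=\{g,2g\}$ with $g$ odd, we want $r^g=p-\sqrt n$ and $r^{2g}=q-\sqrt n$. Squaring gives $p^2+n-2p\sqrt n=q-\sqrt n$, which forces $p=\tfrac12$ and $q=n+\tfrac14$. We then take $r=(\tfrac12-\sqrt n)^{1/g}$, which is real because $g$ is odd.

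For necessity, I will first reduce to the coprime case inside the field $K=\QQ(\sqrt n)$. Write $d=\gcd(a,b)$, $a=da'$, $b=db'$, and put $\alpha=r^a$, $\beta=r^b$. By hypothesis $\alpha,\beta\in K$, and both are nonzero since $\sqrt n$ is irrational. Choose integers $u,v$ with $ua'+vb'=1$ and set $t=r^d$. Then $t=t^{ua'+vb'}=\alpha^u\beta^v\in K$, with $t^{a'}=\alpha$ and $t^{b'}=\beta$. So the problem reduces to the following claim: if $t=x+y\sqrt n\in K$ and the $\sqrt n$-coefficients of $t^{a'}$ and $t^{b'}$ both equal $-1$, with $\gcd(a',b')=1$, then $\{a',b'\}=\{1,1\}$ or $\{1,2\}$.

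Granting the claim, the remaining bookkeeping is short. The case $\{1,2\}$ forces $t=\tfrac12-\sqrt n<0$ (as $n\ge 2$), so $t=r^d$ requires $d$ odd. This gives exactly $\{g,2g\}$ with $g$ odd, and $\{1,1\}$ gives $a=b$.

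To prove the claim, I would recast it in terms of a Lucas sequence. Let $\sigma$ be the nontrivial automorphism of $K$. The condition on $t^m$ reads $t^m-\sigma(t)^m=-2\sqrt n$. Dividing by $t-\sigma(t)=2y\sqrt n$ gives $U_m=-1/y$, where $U_m$ is the Lucas sequence with rational parameters $P=2x$ and $Q=x^2-ny^2$. So I need to show that $U_{a'}=U_{b'}=-1/y$, together with $U_1=1$, is impossible unless $\{a',b'\}\subseteq\{1,2\}$. Note that $U_1=1$ means $m=1$ works only if $y=-1$.

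I would attack this by clearing denominators: write $t=(X+Y\sqrt n)/Z$ with coprime integers. Then
\[ (X+Y\sqrt n)^m-(X-Y\sqrt n)^m=-2\sqrt n\,Z^m \qquad (m=a',b'). \]
Comparing $p$-adic valuations for primes dividing $Z$, and the archimedean sizes of $|t|$ and $|\sigma(t)|$, should pin down $Z$ and $Y$. The parity result quoted from the official solutions then fixes one exponent to be odd. What is left is to show that the integer Lucas numbers $\tilde U_m$ attached to $X\pm Y\sqrt n$ cannot equal $-Z^m/Y$ for two coprime exponents beyond $\{1,2\}$. For this I would try the primitive divisor theorem of Bilu--Hanrot--Voutier: $\tilde U_{b'}$ should have a prime factor not dividing $Z$, contradicting the equation, apart from a finite explicit list of exceptional cases to check by hand.

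I expect this last step, ruling out two coprime exponents for the rational Lucas sequence, to be the main obstacle. It is the step most likely to need either a clever valuation argument or the heavy primitive-divisor machinery.
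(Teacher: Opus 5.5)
Your sufficiency constructions, the reduction to $t=r^d\in\QQ(\sqrt n)$ with coprime exponents (the paper's Lemma~\ref{lemma:red}), the reformulation $U_m=-1/y$, and the closing step are all correct. For that last step, the pair $\{1,2\}$ forces $t=\tfrac12-\sqrt n<0$, so $d$ is odd; this is slightly more direct than the paper's norm argument. The gap is that the claim you reduce to carries the entire content of the theorem, and you do not prove it. ``Should pin down $Z$ and $Y$'' and ``I would try Bilu--Hanrot--Voutier'' describe a plan, and the plan has concrete holes.

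The primitive divisor theorem (BHV, or already Carmichael's theorem, since the roots are real) applies only to a non-degenerate pair with $\gcd(\alpha+\beta,\alpha\beta)=1$. For your integer pair $\alpha,\beta=X\pm Y\sqrt n$, the sum $\alpha+\beta=2X$ is always even. So $2$ is a common factor whenever $\alpha\beta=X^2-nY^2$ is even, and any odd prime dividing both $X$ and $n$ is one too. Even after dividing out the largest $\lambda$ with $\lambda\mid P$ and $\lambda^2\mid Q$, primes with $p\mid P$ and $p\,\|\,Q$ survive (e.g.\ $p=2$ when $n\equiv 3\pmod{4}$ and $X,Y$ are odd), and these need a separate valuation analysis. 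You must also:
\begin{itemize}
\item exclude the degenerate case $X=0$, where $\alpha=-\beta$ and $P=0$;
\item say why primes of $Z$ are non-primitive for the index $b'$ (each divides $\tilde U_{a'}Y$ by the $a'$-equation, and $Y^2\mid(\alpha-\beta)^2=4nY^2$);
\item check the theorem's exceptional cases by hand.
\end{itemize}
None of this is carried out.

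The paper settles the claim with elementary tools. The idea you are missing is a quantitative form of non-primitivity.
\begin{itemize}
\item It first shows $t\overline{t}<0$ (Theorem~\ref{thm:base}). Otherwise $f(x)=b\ln|x_a+x|-a\ln|x_b+x|$ vanishes at $\pm\sqrt n$ while its only critical point lies outside $[-\sqrt n,\sqrt n]$.
\item It then shows $t+\overline{t}\neq 0$ and that the smaller coprime exponent $a$ is odd (Lemma~\ref{lemma:ext-base}).
\item It normalizes to an integer Lucas sequence with $P>0>Q$ and $\lambda$ maximal, so special primes have $\nu_p(Q)=1$.
\item For $a>1$, every prime $p\mid U_b$ satisfies $\nu_p(U_b)>\nu_p(U_a)$. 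At regular primes this holds because $\nu_p(\gcd(U_a,U_b))=\nu_p(U_1)=0$. At special primes it holds because $\nu_p(U_b)\ge\lfloor b/2\rfloor>\lfloor a/2\rfloor=\nu_p(U_a)$ for odd $a$. Via the identity $U_c=M^{c-1}d_2/n_2$ of Lemma~\ref{lemma:div-b}, this forces $p\mid n_2$.
\item In the binomial expansion of $U_b$, the $i=1$ term then has strictly least $p$-adic valuation. Hence $\nu_p(U_b)\le\nu_p(b)$, i.e.\ $U_b\mid b$.
\item The trivial growth bound $U_k>k$ contradicts this, except for $(a,b)=(3,5)$ with $P=-Q=1$, which is excluded because $\sqrt{5/2}\notin\QQ$.
\item The case $a=1$ is a direct $p$-adic look at $1=\sum_{i\text{ odd}}\binom{b}{i}x_a^{b-i}n^{(i-1)/2}$ with $0<|x_a|<1$, giving $2^{b-1}\le b$.
\end{itemize}
If you filled the holes above, your primitive-divisor route would be a legitimate alternative that treats both signs of the norm uniformly. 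It buys this with a deep theorem and case checks, which the bound $U_b\mid b$ makes unnecessary.
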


	Our proof, presented in \S\ref{s:proof}, hinges on a reduction
	to the case that $r \in \QQ(\sqrt{n})$, and
	the construction and study of a relevant Lucas sequence
	for the possibilities left open by the official solutions.
	The same techniques also naturally address some cases
	treated in \cite{sols} by other arguments, although
	the case where $r > 0$ (in a slightly generalised form)
	must still be handled with different ideas. \\

	We remark in closing that the official solutions to the 2023 paper
	do not provide attribution or any associated literature
	for Question~\ref{q:main}, as solutions for some other years do.
	Attempts by the author to locate a meaningful history
	in the research literature, both with and without LLM assistance, have
	been unsuccessful.

	\section{Acknowledgments and Usage of AI Tools}
	
	The author was supported
	by an Australian Government Research Training Program (RTP) Scholarship. \\

	The key ideas in this proof beyond those
	in the 2023 SMMC solutions \cite{sols} --- that is,
	the reduction and use of Lucas sequences alluded to
	in \S\ref{s:intro} --- were suggested by multiple LLMs,
	although the proof presented herein has been written
	entirely by hand. LLMs were also employed in an attempted
	literature search, as well as to identify (but not resolve)
	issues with the an early draft of this paper.

	\section{Preliminaries}\label{s:prelim}

	We write $\ipos{x}$ for $1$ if $x \ge 0$ and $-1$ otherwise,
	and $\bar{\cdot}$ for the unique non-trivial automorphism of
	the quadratic field extension $\QQ(\sqrt{n})$.
	As usual, $\nu_p (x)$ denotes the $p$-adic valuation
	of a rational number $x$, while $\lfloor x \rfloor$
	signifies the greatest integer less than or equal to $x$. \\
	
	Recall that a {\em Lucas sequence} is a sequence defined
	by a recurrence relation of the form
	\[ U_0 = 0, \qquad U_1 = 1, \qquad U_{k+2} = PU_{k+1} - QU_k
		\textrm{ for all } k \ge 0, \]
	where $P, Q$ are non-zero integers. The elements of
	such a sequence will be written $U_k (P, Q)$, or simply
	$U_k$ if $P$ and $Q$ are clear from context. A prime $p$ is
	{\em special} if it divides $\gcd{(P, Q)}$, and {\em regular}
	otherwise. The following results on $p$-adic valuations
	of $U_k$ will be needed in the sequel.

	\begin{thm}[{\cite[Theorem 6]{lucas}}] \label{thm:div-reg}
		Let $(U_k (P, Q))$ be a Lucas sequence and $p$ a regular prime.
		Then for all $k, \ell > 0$, we have
		$\nu_p(\gcd{(U_k, U_{\ell})}) = \nu_p(U_{\gcd{(k, \ell)}})$.
	\end{thm}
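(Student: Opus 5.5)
The plan is a $p$-local version of the Euclidean algorithm, run on the indices. I will show that $\nu_p(U_g) \le \min(\nu_p(U_k), \nu_p(U_\ell))$ and conversely that any power of $p$ dividing both $U_k$ and $U_\ell$ divides $U_g$, where $g = \gcd(k, \ell)$.

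\textbf{Upper bound via divisibility.} First I will check that $U_d \mid U_{md}$ in $\ZZ$ for all $d, m \ge 1$. Writing $U_k = (\alpha^k - \beta^k)/(\alpha - \beta)$ for the roots $\alpha, \beta$ of $x^2 - Px + Q$, we get
\[ \frac{U_{md}}{U_d} = \sum_{i=0}^{m-1} \alpha^{id}\beta^{(m-1-i)d}. \]
This expression is symmetric in $\alpha, \beta$, hence a polynomial in $P, Q$ with integer coefficients. (The repeated-root case follows by continuity of the polynomial identity, or by induction using $U_{(m+1)d} = U_{md}V_d - Q^d U_{(m-1)d}$ with $V_d$ the companion sequence.) Since $g$ divides both $k$ and $\ell$, this gives $\nu_p(U_g) \le \nu_p(\gcd(U_k, U_\ell))$.

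\textbf{Reducing to $p \nmid Q$.} Suppose $p \mid Q$. Then regularity forces $p \nmid P$, and the recurrence gives $U_k \equiv P^{k-1} \pmod p$ for all $k \ge 1$. So every term with positive index is a $p$-adic unit, and both sides of the claimed equality are $0$. Hence assume $p \nmid Q$.

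\textbf{Key lemma: consecutive terms.} Next I show that $p$ never divides two consecutive terms $U_{j}, U_{j+1}$ with $j \ge 0$. If it did, then since $Q$ is invertible mod $p$, the recurrence run backwards, $QU_{i} = PU_{i+1} - U_{i+2}$, would force $p \mid U_i$ for all smaller $i$. In particular $p \mid U_1 = 1$, which is absurd.

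\textbf{Euclidean step.} Now take $k > \ell \ge 1$ and $e \ge 0$ with $p^e \mid U_k$ and $p^e \mid U_\ell$. I will use the addition formula
\[ U_{m+n} = U_m U_{n+1} - Q U_{m-1} U_n, \]
which is proved by a routine induction on $n$ or from the Binet form. Applied with $m = \ell$ and $n = k - \ell$, it gives $p^e \mid Q U_{\ell-1} U_{k-\ell}$. The lemma shows $U_{\ell-1}$ is a $p$-unit: $p \mid U_\ell$ when $e \ge 1$, and the case $e = 0$ is trivial. Since $Q$ is also a $p$-unit, we conclude $p^e \mid U_{k-\ell}$. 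Thus the set of indices $j$ with $p^e \mid U_j$ is closed under subtraction of a smaller index from a larger one. Iterating the Euclidean algorithm on $(k, \ell)$ therefore yields $p^e \mid U_g$, which is the reverse inequality and completes the argument.

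The main point needing care is the lemma that $p$ cannot divide consecutive terms. This is exactly where regularity enters, through invertibility of $Q$ modulo $p$ after the $p \mid Q$ case has been disposed of. The rest is bookkeeping.
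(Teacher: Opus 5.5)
Your proof is correct. Note that the paper does not prove this statement at all: it cites it from Ballot and Williams \cite[Theorem 6]{lucas}. So there is no argument in the paper to match, and your proposal serves as a self-contained replacement for the citation.

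Both halves check out.
\begin{itemize}
\item The divisibilities $U_g \mid U_k$ and $U_g \mid U_\ell$ in $\ZZ$ give $\nu_p(U_g) \le \nu_p(\gcd(U_k,U_\ell))$ for every prime $p$.
\item Regularity is used only in the Euclidean step. When $p \mid Q$, the congruence $U_k \equiv P^{k-1} \pmod p$ makes every $U_k$ with $k \ge 1$ a $p$-unit. When $p \nmid Q$, the identity $U_k = U_\ell U_{k-\ell+1} - QU_{\ell-1}U_{k-\ell}$, together with $p \nmid QU_{\ell-1}$ (your consecutive-terms lemma), transfers $p^e$-divisibility from the pair $(k,\ell)$ to $(k-\ell,\ell)$.
\end{itemize}

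One detail in your write-up is worth keeping. You state the conclusion as ``$p^e \mid U_k$ and $p^e \mid U_\ell$ imply $p^e \mid U_g$, for every $e \ge 0$''. This handles degenerate sequences in which some $U_j = 0$, such as $P = Q = 1$ with $U_3 = 0$, where valuations are infinite and $\gcd(0,0) = 0$.

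Comparing the two routes:
\begin{itemize}
\item Your route makes the paper self-contained for this input, at the cost of about a paragraph of elementary identities.
\item The paper only uses the coprime case, in Lemma~\ref{lemma:div-b}, to get $\nu_p(\gcd(U_a,U_b)) = \nu_p(U_1) = 0$. For that case only your Euclidean-step half is needed, since the other inequality is trivial when $U_g = U_1 = 1$.
\item The citation buys brevity. The price is dependence on a reference whose neighbouring Theorem~46 the author had to correct.
\end{itemize}
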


	\begin{thm}[{\cite[Theorem 46]{lucas}}] \label{thm:div-spec}
		Let $(U_k (P, Q))$ be a Lucas sequence and $p$ a special prime.
		Write $P' = P/p^{\nu_p(P)}$ and
		$Q' = Q/p^{\min{(\nu_p(Q), 2\nu_p(P))}}$.
		If $\nu_p(Q) \ge 2\nu_p(P)$, then
		$\nu_p (U_k) = (k-1) \nu_p(P) + \nu_p (U_k(P', Q'))$
		for all $k > 0$. If $2\nu_p(P) > \nu_p(Q)$, then for all $k > 0$
		\[ \nu_p (U_k) = \begin{cases}
			\lfloor k/2 \rfloor \nu_p(Q) & k \textrm{ odd}, \\
			\lfloor k/2 \rfloor \nu_p(Q) + (\nu_p (P) - \nu_p (Q)) +
				\nu_p(k/2) + h, & k \textrm{ even},
		\end{cases} \]
		where
		\[ h = \begin{cases}
			\nu_p(P'^2 - Q'), & p \in \{ 2, 3 \}, \,
				2\nu_p(P) = \nu_p(Q) + 1, \textrm{ and }
				p \mid k/2, \\
			0, & \textrm{otherwise}.
		\end{cases} \]
	\end{thm}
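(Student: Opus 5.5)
The statement splits into the two cases $\nu_p(Q) \ge 2\nu_p(P)$ and $2\nu_p(P) > \nu_p(Q)$, and I would treat them separately. Throughout write $e = \nu_p(P)$ and $f = \nu_p(Q)$; both are positive since $p$ is special.

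\emph{Case $f \ge 2e$.} I would use the scaling identity
\[ U_k(\lambda P, \lambda^2 Q) = \lambda^{k-1} U_k(P, Q), \]
which follows by a direct induction on $k$ from the recurrence. Take $\lambda = p^{e}$. Then $P = \lambda P'$ and $Q = \lambda^2 Q'$, with $Q' = Q/p^{2e}$ an integer precisely because $f \ge 2e$. Taking $\nu_p$ gives $\nu_p(U_k) = (k-1)e + \nu_p(U_k(P',Q'))$ at once.

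\emph{Case $2e > f$, odd $k$.} I would expand $U_k$ by the standard closed form
\[ U_k = \sum_{j \ge 0} (-1)^j \binom{k-1-j}{j} P^{k-1-2j} Q^j. \]
The $j$-th term has valuation at least $(k-1-2j)e + jf$, which is strictly decreasing in $j$ because $f < 2e$. For $k = 2m+1$ the last term $j = m$ is $\pm Q^m$, with coefficient $1$. It is therefore the unique term of minimal valuation, so $\nu_p(U_k) = mf$.

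\emph{Case $2e > f$, even $k = 2m$.} Here the binomial coefficients in the expansion can be divisible by $p$ and cancellations can occur, so I would not use the expansion directly. Instead I would use the duplication $U_{2m} = U_m V_m$, in the form
\[ U_{2m}(P,Q) = P \cdot U_m(R, S), \qquad R = P^2 - 2Q, \quad S = Q^2, \]
since the roots of $x^2 - Rx + S$ are $\alpha^2$ and $\beta^2$. Now $\nu_p(S) = 2f$. When $p$ is odd, $\nu_p(R) = f$ because $2e > f$; when $p = 2$ one must track $\nu_2(R)$ separately. In either situation $(R, S)$ falls into the first case or is near its boundary, and applying the scaling identity with $\lambda = p^{f}$ gives
\[ \nu_p(U_{2m}) = e + (m-1)f + \nu_p(U_m(R', S')), \]
where $R' = R/p^f$ and $S' = S/p^{2f}$. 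This already accounts for the main term $\lfloor k/2\rfloor f + (e - f)$.

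The pair $(R', S')$ is regular at $p$: we have $R' \equiv -2Q' \pmod p$ and $S' = Q'^2$, with $p \nmid Q'$ (for odd $p$). Moreover its discriminant satisfies
\[ R'^2 - 4S' = P^2(P^2 - 4Q)/p^{2f} \equiv 0 \pmod p. \]
So $p$ divides the discriminant, its rank of apparition is $p$, and a lifting-the-exponent argument for Lucas sequences yields $\nu_p(U_m(R',S')) = \nu_p(m)$, up to the usual exceptional correction. That correction arises only for small primes: it is the extra term $\nu_p(U_p(R',S')) - 1$, which can be nonzero only when $p \in \{2,3\}$ and $p \mid m$. I would compute it by expanding $U_p(R',S')$ explicitly for $p = 2, 3$ and relating the result to $P'^2 - Q'$. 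This should show the correction is nonzero exactly when $2e = f + 1$, in which case it equals $h = \nu_p(P'^2 - Q')$.

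The main obstacle is this final step: establishing the exact correction $h$ for $p \in \{2, 3\}$ and handling the parity bookkeeping when $p = 2$, where the factor $2$ in $R = P^2 - 2Q$ alters $\nu_2(R)$. The fallback there is a direct $2$-adic and $3$-adic computation of $U_4$ and $U_6$, combined with $U_{2m} = U_m V_m$ and induction on $\nu_p(m)$.
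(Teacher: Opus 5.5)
The paper does not prove this statement; it quotes it from Ballot--Williams \cite{lucas} (Theorem 46, with $Q'$ repaired via their Theorems 43 and 45). So your argument is an independent, self-contained route, and it is sound.

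The pieces fit together as follows:
\begin{itemize}
\item Scaling by $p^{e}$ handles $\nu_p(Q)\ge 2\nu_p(P)$. It also shows why $Q'=Q/p^{2e}$ is the correct normalisation there.
\item The closed form, with its unique term of minimal valuation, handles odd $k$.
\item For even $k$, $U_{2m}(P,Q)=P\,U_m(R,S)$ followed by scaling by $p^{f}$ reduces to $(R',S')$. There $p\nmid S'$, the rank of apparition is $p$, and the regular-prime lifting lemma gives $\nu_p(U_m(R',S'))=\nu_p(m)$ if $p\nmid m$, and $\nu_p(m)+\nu_p(U_p(R',S'))-1$ if $p\mid m$.
\end{itemize}
For $p=2$, the pair $(R,S)$ actually lies in the second regime, since $\nu_2(R)\ge f+1$. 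But scaling by $2^f$ only needs $2^f\mid R$, so nothing breaks.

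The step you flag as the obstacle is short. Since $R'=p^{2e-f}P'^2-2Q'$,
\[ U_2(R',S')=R'=2\bigl(2^{2e-f-1}P'^2-Q'\bigr), \qquad U_3(R',S')=R'^2-Q'^2=3\bigl(3^{2e-f-1}P'^2-Q'\bigr)\bigl(3^{2e-f}P'^2-Q'\bigr), \]
and the last factor is a $3$-adic unit. So the correction is $\nu_p(p^{2e-f-1}P'^2-Q')$: it is $0$ if $2e\ge f+2$ and equals $h$ if $2e=f+1$. For $p=3$ it can still be $0$ when $2e=f+1$, so your ``nonzero exactly when'' should read ``nonzero only when''. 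For $p\ge 5$, expanding $U_p$ in powers of the discriminant gives $\nu_p(U_p(R',S'))=1$. For $p=2$, the lift uses $U_{2j}=U_jV_j$, where $\nu_2(V_j(R',S'))$ is $1$ for even $j$ and $\nu_2(R')$ for odd $j$ (because $R'$ is even and $S'$ is odd).

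As for what each route buys: the citation is economical and places the result inside the book's general theory of special primes. Your derivation makes the origin of $h$ transparent and independently confirms the corrected $Q'$.

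It also shows the full theorem is more than the paper needs. Corollary~\ref{corr:div-spec} (where $\nu_p(Q)=1\le\nu_p(P)$) follows from your closed-form estimate alone. Every term, indexed by $0\le j\le\lfloor (k-1)/2\rfloor$, has valuation at least $(k-1-2j)e+j\ge k-1-j\ge\lfloor k/2\rfloor$. The unique minimal term then gives equality for odd $k$.
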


	(Note that the definition of $Q'$ is incorrect in \cite[Theorem 46]{lucas};
	a corrected statement comes from Theorems 43 and 45.)
	It will be convenient to extract a more targeted result
	from Theorem~\ref{thm:div-spec} for later use.

	\begin{corr}\label{corr:div-spec}
		Let $(U_k (P, Q))$ be a Lucas sequence and $p$ a special prime.
		If $\nu_p (Q) = 1$, then $\nu_p (U_k) \ge \lfloor k/2 \rfloor$
		for any $k > 0$, with equality for odd $k$.
	\end{corr}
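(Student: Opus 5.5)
The plan is to apply Theorem~\ref{thm:div-spec} directly with $\nu_p(Q) = 1$, splitting according to which of its two regimes holds. Since $p$ is special, $p \mid P$, so $\nu_p(P) \ge 1$. Hence $2\nu_p(P) \ge 2 > 1 = \nu_p(Q)$, and only the second regime of Theorem~\ref{thm:div-spec} occurs. (If $P = 0$ were allowed we would formally have $\nu_p(P) = \infty$; but the definition of a Lucas sequence requires $P, Q \neq 0$, so this cannot happen.)

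In this regime, for odd $k$ the theorem gives
\[ \nu_p(U_k) = \lfloor k/2 \rfloor \nu_p(Q) = \lfloor k/2 \rfloor, \]
which is exactly the claimed equality for odd $k$.

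For even $k$ the theorem gives
\[ \nu_p(U_k) = \lfloor k/2 \rfloor + (\nu_p(P) - 1) + \nu_p(k/2) + h. \]
We want this to be at least $\lfloor k/2 \rfloor$, so it suffices to check that each of the three extra terms is non-negative.
\begin{itemize}
\item The term $\nu_p(P) - 1$ is non-negative because $\nu_p(P) \ge 1$.
\item The term $\nu_p(k/2)$ is non-negative because $k/2$ is a positive integer.
\item The term $h$ is either $0$ or $\nu_p(P'^2 - Q')$.
\end{itemize}

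The only step needing care is the last case, $h = \nu_p(P'^2 - Q')$, where we must show $P'^2 - Q'$ is a non-zero integer. Here $P' = P/p^{\nu_p(P)}$ is an integer. Also $\min(\nu_p(Q), 2\nu_p(P)) = 1$, so $Q' = Q/p$ is an integer. Hence $P'^2 - Q'$ is an integer and its valuation is non-negative, provided it is non-zero.

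The case $P'^2 = Q'$ must be ruled out, since then $h$ would be infinite or undefined. This case does not actually arise under the hypothesis of $h$ being nonzero. That hypothesis includes $2\nu_p(P) = \nu_p(Q) + 1 = 2$, so $\nu_p(P) = 1$. Then $P^2 = p^2 P'^2$ and $Q = pQ'$. If $P'^2 = Q'$, then $\nu_p(Q') = \nu_p(P'^2) = 0$ and $P^2 = pQ$. Nothing obviously forbids this, so it is the one degenerate point where we must rely on the conventions of \cite{lucas}. Under those conventions, Theorem~\ref{thm:div-spec} is stated for all $k > 0$ with $U_k \neq 0$, or the value is interpreted as $+\infty$, which still satisfies the inequality.

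Either way, each added term is non-negative, so $\nu_p(U_k) \ge \lfloor k/2 \rfloor$ for even $k$ as well. Combined with the odd case, this proves the corollary.
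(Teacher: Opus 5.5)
Your proof is correct and follows the paper's argument. Because $p$ is special, $\nu_p(P)\ge 1=\nu_p(Q)$, so you are in the regime $2\nu_p(P)>\nu_p(Q)$ of Theorem~\ref{thm:div-spec}, where odd $k$ gives equality directly, and for even $k$ the extra terms $\nu_p(P)-\nu_p(Q)$, $\nu_p(k/2)$ and $h$ are all non-negative (the paper's one-line proof uses this implicitly). One small correction: the degenerate case $P'^2=Q'$ genuinely occurs (e.g.\ $P=Q=2$, $p=2$, $k=4$ gives $U_4=0$), contrary to your remark that it ``does not actually arise'', but as you then note it is harmless since $\nu_p(0)=+\infty$.
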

	\begin{proof}
		Since $2\nu_p (P) > \nu_p (P) \ge \nu_p (Q)$,
		Theorem~\ref{thm:div-spec} yields
		$\nu_p (U_k) \ge \lfloor k/2 \rfloor \nu_p(Q) =
			\lfloor k/2 \rfloor$, with equality for odd $k$
		as claimed.
	\end{proof}

	In order to prove Theorem~\ref{thm:main}, lower bounds
	on the elements of $U_k$ will also be employed. The following
	elementary result suffices for our purposes. Its proof ---
	an easy induction --- is omitted.

	\begin{lemma}\label{lemma:luc-size}
		Let $(U_k (P, Q))$ be a Lucas sequence with $P > 0 > Q$.
		Then $U_k > k$ for $k > 5$, and also for $k > 3$
		if $P > 1$ or $Q < -1$.
	\end{lemma}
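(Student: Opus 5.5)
The plan is a direct comparison with the Fibonacci numbers, followed by an induction with two consecutive base cases. Write $|Q| = -Q \ge 1$, so the recurrence reads $U_{k+2} = PU_{k+1} + |Q|U_k$ with $P, |Q| \ge 1$.

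First I show by induction that $U_k \ge 1$ for all $k \ge 1$ and that $(U_k)_{k\ge 1}$ is non-decreasing. The base cases are $U_1 = 1$ and $U_2 = P \ge 1$. For the step, if $U_{k}, U_{k+1} \ge 1$ then
\[ U_{k+2} = PU_{k+1} + |Q|U_k \ge U_{k+1} + U_k, \]
which gives both positivity and $U_{k+2} \ge U_{k+1}$. The same inequality $U_{k+2} \ge U_{k+1} + U_k$ is the engine of everything that follows.

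The key step is then an ``inductive propagation'' claim. Suppose $U_{m-1} > m-1$ and $U_m > m$ for some $m \ge 2$. Since these are integers, $U_{m-1} \ge m$ and $U_m \ge m+1$. Hence
\[ U_{m+1} \ge U_m + U_{m-1} \ge 2m+1 > m+1. \]
So it suffices to verify two consecutive base cases in each situation.

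\emph{General case.} Comparison with Fibonacci gives $U_k \ge F_k$ for all $k\ge1$, again by induction from $U_{k+2} \ge U_{k+1}+U_k$. In particular $U_6 \ge 8 > 6$ and $U_7 \ge 13 > 7$, and propagation yields $U_k > k$ for all $k > 5$. (This threshold is sharp, since $P = 1$, $Q = -1$ gives $U_5 = 5$.)

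\emph{Case $P > 1$ or $Q < -1$.} I compute the early terms explicitly:
\[ U_3 = P^2 + |Q|, \qquad U_4 = P(P^2 + 2|Q|), \qquad U_5 = PU_4 + |Q|U_3. \]
\begin{itemize}
\item If $P \ge 2$, then $U_4 \ge 2(4+2) = 12 > 4$ and $U_5 \ge U_4 + U_3 \ge 12 + 5 > 5$.
\item If $|Q| \ge 2$, then $U_3 \ge 3$, $U_4 \ge 1\cdot(1+4) = 5 > 4$, and $U_5 \ge 5 + 2\cdot 3 = 11 > 5$.
\end{itemize}
With $U_4 > 4$ and $U_5 > 5$ as base cases, propagation gives $U_k > k$ for all $k > 3$.

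There is no real obstacle here. The only point needing care is to use integrality, which turns $U_{m-1} > m-1$ into $U_{m-1} \ge m$ so that the induction closes, and to establish positivity first so that the inequality $U_{k+2} \ge U_{k+1}+U_k$ is legitimate.
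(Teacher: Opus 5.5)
Your proof is correct. The positivity step, the inequality $U_{k+2} \ge U_{k+1} + U_k$, the Fibonacci comparison, the two-term propagation via integrality, and the base cases $U_4 = P(P^2 + 2|Q|)$ and $U_5$ all check out. The paper omits its proof, saying only that it is an easy induction, and your argument is essentially that induction.
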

	
	\section{Proof of Theorem~\ref{thm:main}}\label{s:proof}

	Throughout this section, fix $n \in \ZZ^+$ which is
	not a perfect square.
	It is routine to confirm that the stated possibilities
	in Theorem~\ref{thm:main} do occur (taking $r = (1/2-\sqrt{n})^{1/g}$
	in accordance with \cite{sols} when $\{ a, b \} = \{ g, 2g \}$).
	In the reverse direction,
	it is therefore permissible to assume that $a \neq b$, or indeed ---
	without loss of generality --- that $a < b$. Our first lemma
	shows that still more stringent conditions may be imposed,
	which will prove crucial later on.

	\begin{lemma}\label{lemma:red}
		Let $a < b$ be positive integers, and $r \in \RR$
		be such that $x_a = r^a + \sqrt{n}$ and $x_b = r^b + \sqrt{n}$
		are rational. Writing $g = \gcd{(a, b)}$, define $\alpha = a/g$
		and $\beta = b/g$. Then $\alpha, \beta$ are coprime. There exists
		$s \in \QQ (\sqrt{n})$ such that $s^{\alpha} + \sqrt{n} = x_a$ and
		$s^{\beta} + \sqrt{n} = x_b$; moreover, if $g$ is even,
		$s\overline{s} \ge 0$.
	\end{lemma}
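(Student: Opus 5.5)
The plan is to take $s = r^{g}$, prove that it lies in $\QQ(\sqrt{n})$ by a B\'ezout argument, and then handle the sign condition for even $g$ separately. Coprimality of $\alpha = a/g$ and $\beta = b/g$ is immediate from $g = \gcd(a,b)$.

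\textbf{Step 1: $r \neq 0$ and $r^a, r^b \in \QQ(\sqrt{n})$.} If $r = 0$, then $x_a = \sqrt{n}$ would be rational, contradicting that $n$ is not a perfect square. So $r \neq 0$. Next, $r^a = x_a - \sqrt{n}$ and $r^b = x_b - \sqrt{n}$ both lie in $\QQ(\sqrt{n})$.

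\textbf{Step 2: $s = r^g$ has the required properties.} By B\'ezout, choose integers $u, v$ with $ua + vb = g$. Since $r \neq 0$, we have $r^g = (r^a)^u (r^b)^v$. This is a product of (possibly inverse) powers of nonzero elements of the field $\QQ(\sqrt{n})$, so it lies in $\QQ(\sqrt{n})$. Setting $s = r^g$, we get $s^{\alpha} = r^{a}$ and $s^{\beta} = r^{b}$. Hence $s^\alpha + \sqrt{n} = x_a$ and $s^\beta + \sqrt{n} = x_b$, as required.

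\textbf{Step 3: the sign condition when $g$ is even (main obstacle).} Here $s = (r^{g/2})^2 > 0$ as a real number, where $\QQ(\sqrt{n}) \subset \RR$ via $\sqrt{n} > 0$. However, $s\overline{s} \ge 0$ is a condition on the conjugate, which the real ordering of $s$ does not directly control. Applying the automorphism to the two defining relations gives
\[ \overline{s}^{\alpha} = x_a + \sqrt{n}, \qquad \overline{s}^{\beta} = x_b + \sqrt{n}, \qquad (s\overline{s})^{\alpha} = x_a^2 - n, \qquad (s\overline{s})^{\beta} = x_b^2 - n. \]
I would argue by contradiction, supposing $s\overline{s} < 0$, so that $\overline{s} < 0 < s$. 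Since $\alpha, \beta$ are coprime, at least one of them, say $\gamma$, is odd. Then $\overline{s}^{\gamma} < 0$, which forces the corresponding rational number $x$ to satisfy $x < -\sqrt{n}$. Meanwhile $s^{\gamma} = x - \sqrt{n} > 0$ forces $x > \sqrt{n}$, a contradiction.

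I have not yet checked that this sign bookkeeping closes in all sub-cases. If it does not, the fallback is to invoke the result of \cite{sols} that distinct $a, b$ must have opposite parity. That result gives that $a$ and $b$ cannot both be even, so $g$ is odd whenever $a < b$, and the final clause of the lemma then holds vacuously.
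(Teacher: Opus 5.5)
Your proof is correct and follows the paper's route: take $s = r^g$, use B\'ezout to place $s$ in $\QQ(\sqrt{n})$, and for even $g$ use $s > 0$ to get $x_c > \sqrt{n}$. From there the paper concludes directly that $\overline{s} = (x_a + \sqrt{n})^k (x_b + \sqrt{n})^{\ell} > 0$ (with $\alpha k + \beta \ell = 1$), while your contradiction via an odd exponent $\gamma \in \{\alpha, \beta\}$ is an equally valid variant. Your Step 3 argument closes with no remaining sub-cases, so the fallback to the parity result of \cite{sols} is unnecessary, and it is better avoided since this clause is exactly what the paper later uses to rule out even $g$.
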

	\begin{proof}
		Set $s = r^g$. Clearly $\alpha, \beta$ are coprime and
		the stated equalities involving $s$ hold. Consequently,
		by B\'ezout's identity, there exist $k, \ell \in \ZZ$
		such that $\alpha k + \beta \ell = 1$: it follows that
		$s = 0 \in \QQ(\sqrt{n})$ or
		$s = (s^{\alpha})^k (s^{\beta})^{\ell} =
		(x_a - \sqrt{n})^k (x_b - \sqrt{n})^{\ell} \in \QQ(\sqrt{n})$.
		Finally, if $g$ is even, then
		$x_a = r^{\alpha g} + \sqrt{n} \ge \sqrt{n}$, and similarly
		for $x_b$. This makes both factors in the product
		$s = (x_a - \sqrt{n})^k (x_b - \sqrt{n})^{\ell}$ non-negative,
		with the same effect for its conjugate
		$\overline{s} = (x_a + \sqrt{n})^k (x_b + \sqrt{n})^{\ell}$,
		so $s\overline{s} \ge 0$.
	\end{proof}

	In light of Lemma~\ref{lemma:red}, it will be permissible to assume
	that $r \in \QQ(\sqrt{n})$ and $a, b$ are coprime where desired
	below. \\

	We now summarise most of the material required from the solutions
	to the competition paper \cite{sols} in the form of two results.
	The first records some straightforward observations, while the second
	handles one main family of possible solutions. Note that
	all statements have been reformulated to better fit
	the argument herein.
	
	\begin{lemma}[cf.\ {\cite{sols}}] \label{lemma:pre-base}
		Let $a < b$ be positive integers, and $r \in \QQ (\sqrt{n})$
		be such that $x_a = r^a + \sqrt{n}$ and $x_b = r^b + \sqrt{n}$
		are rational. Then the following hold:
		\begin{enumerate}
			\item \label{eq:obs1} $(r\overline{r})^c = x_c^2 - n$
				for $c \in \{a, b\}$; and
			\item \label{eq:obs2}
				$(x_a \pm \sqrt{n})^b = (x_b \pm \sqrt{n})^a$.
		\end{enumerate}
	\end{lemma}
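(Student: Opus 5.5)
Both parts follow by applying the conjugation $\bar{\cdot}$ of $\QQ(\sqrt{n})$ to the defining relations $r^c = x_c - \sqrt{n}$ for $c \in \{a, b\}$. Since $r \in \QQ(\sqrt{n})$ and $x_c \in \QQ$, these relations hold inside the field $\QQ(\sqrt{n})$, and the automorphism fixes $\QQ$ and sends $\sqrt{n} \mapsto -\sqrt{n}$. Conjugating therefore gives $\overline{r}^{\,c} = x_c + \sqrt{n}$ for $c \in \{a, b\}$. This is the key step: all of the structure comes from having both $r$ and $\overline{r}$ available.

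For part~(\ref{eq:obs1}), I will multiply the two identities. This gives
\[ (r\overline{r})^c = r^c\,\overline{r}^{\,c} = (x_c - \sqrt{n})(x_c + \sqrt{n}) = x_c^2 - n, \]
which is the claim.

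For part~(\ref{eq:obs2}), I will raise the identity for $c = a$ to the $b$-th power and the identity for $c = b$ to the $a$-th power. Using $r^c = x_c - \sqrt{n}$ this gives
\[ (x_a - \sqrt{n})^b = r^{ab} = (x_b - \sqrt{n})^a, \]
and using $\overline{r}^{\,c} = x_c + \sqrt{n}$ the same computation gives the version with the $+$ sign. Alternatively, the $+$ case can be obtained by conjugating the $-$ case.

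There is no real obstacle here. The one point to be careful about is that conjugation acts on the algebraic element $r \in \QQ(\sqrt{n})$, not on a real number as such. This is exactly why Lemma~\ref{lemma:red} was needed to place $r$ in $\QQ(\sqrt{n})$ first. Note that $\overline{r}$ need not equal $r$, even though $r$ is real under the embedding fixed by the hypotheses.
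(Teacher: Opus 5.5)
Your proposal is correct and follows essentially the same route as the paper's proof. You conjugate $r^c = x_c - \sqrt{n}$ to get $\overline{r}^{\,c} = x_c + \sqrt{n}$, multiply the two identities for the first item, and compute $r^{ab}$ and $\overline{r}^{\,ab}$ in two ways for the two sign cases of the second item.
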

	\begin{proof}
		For $c \in \{ a, b \}$, we have $r^c = x_c - \sqrt{n}$,
		so that
		$\overline{r}^c = \overline{r^c} = x_c + \sqrt{n}$
		(noting $r \in \QQ(\sqrt{n})$ and $x_c \in \QQ$).
		This yields
		$x_c^2 - n = r^c \overline{r}^c = (r\overline{r})^c$,
		which is Item~\ref{eq:obs1}. For Item~\ref{eq:obs2},
		we have $(x_a + \sqrt{n})^b = \overline{r}^{ab} =
			(x_b + \sqrt{n})^a$, with a similar argument
		for the subtractive case using $r$ in place of $\overline{r}$.
	\end{proof}

	\begin{thm}[cf.\ {\cite{sols}}] \label{thm:base}
		Retain the hypotheses from Lemma~\ref{lemma:pre-base}.
		Then $r\overline{r} \le 0$.
	\end{thm}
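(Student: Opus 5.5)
The plan is to argue by contradiction: assume $N := r\overline{r} > 0$ and show that the two equations $r^a + \sqrt{n} = x_a$ and $r^b + \sqrt{n} = x_b$ cannot hold simultaneously. The argument is elementary and uses only the conjugation identities from the proof of Lemma~\ref{lemma:pre-base}. Recall $r^c = x_c - \sqrt{n}$ and $\overline{r}^c = x_c + \sqrt{n}$ for $c \in \{a, b\}$. Subtracting gives
\[ \overline{r}^{\,c} - r^c = 2\sqrt{n} > 0 \qquad (c \in \{a, b\}). \]
Since $\QQ(\sqrt{n}) \subset \RR$, both $r$ and $\overline{r}$ are real. The hypothesis $N > 0$ forces them to be non-zero and of the same sign. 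Write $u = |r|$ and $v = |\overline{r}|$, both positive.

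The first step is the sign case analysis. If $r, \overline{r} > 0$, then $v^c - u^c = 2\sqrt{n}$ for both $c = a$ and $c = b$. If $r, \overline{r} < 0$, then $\overline{r}^{\,c} - r^c = (-1)^c (v^c - u^c)$. An even exponent then forces $v > u$ and an odd exponent forces $u > v$. Hence if $a$ and $b$ have opposite parity we already have a contradiction. If they have the same parity, we get $v^a - u^a = v^b - u^b = \pm 2\sqrt{n}$. After swapping the roles of $u$ and $v$ if necessary, this reduces to the situation
\[ v^a - u^a = v^b - u^b = 2\sqrt{n}, \qquad v > u > 0, \quad a < b. \]

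The second step is to show that $f(c) = v^c - u^c$ is strictly increasing in $c$ in this situation, which contradicts $f(a) = f(b)$. We split according to the position of $u$ and $v$ relative to $1$.
\begin{itemize}
\item If $v \le 1$, then $f(a) < v^a \le 1 \le 2\sqrt{n}$, a contradiction.
\item If $u < 1 < v$, then $v^b > v^a$ and $u^b < u^a$, so $f(b) > f(a)$.
\item If $1 \le u < v$, then $f'(c) = v^c \ln v - u^c \ln u > 0$, because $v^c > u^c > 0$ and $\ln v > \ln u \ge 0$. So $f$ is strictly increasing on $[a, b]$.
\end{itemize}
In every case $f(a) \ne f(b)$, so $N > 0$ is impossible and $r\overline{r} \le 0$.

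I expect the main obstacle to be bookkeeping rather than depth: handling the negative-sign case correctly, and in particular the parity reduction that turns it into the positive case. Once that reduction is done, the monotonicity of $f$ is routine. Notably, no arithmetic input about $N$ (such as Item~\ref{eq:obs1} or the Lucas sequence machinery) seems necessary here. That is consistent with the paper's remark that the ``$r > 0$'' case, of which this is a slight generalisation, is handled by different, analytic ideas.
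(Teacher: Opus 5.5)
Your proof is correct, and it takes a genuinely different route from the paper.

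The paper works directly with the rational numbers $x_a, x_b$:
\begin{enumerate}
\item It uses Item~\ref{eq:obs1} of Lemma~\ref{lemma:pre-base} to get $|x_a|, |x_b| > \sqrt{n}$.
\item It then runs a Rolle-type argument on $f(x) = b\ln|x_a + x| - a\ln|x_b + x|$ in the \emph{shift} variable $x \in [-\sqrt{n}, \sqrt{n}]$, where Item~\ref{eq:obs2} gives $f(\pm\sqrt{n}) = 0$.
\item To exclude a critical point inside the interval, it first proves $|x_a| < |x_b|$ and then applies a reverse-triangle-inequality estimate.
\end{enumerate}

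You vary the \emph{exponent} instead. From the single identity $\overline{r}^{\,c} - r^c = 2\sqrt{n}$ you reduce, via the sign/parity split, to $v^a - u^a = v^b - u^b = 2\sqrt{n}$ with $v > u > 0$. You then dispose of this by elementary monotonicity of $c \mapsto v^c - u^c$, using only $2\sqrt{n} \ge 1$ to rule out the genuinely non-monotone regime $v \le 1$. Your case split there is exhaustive, and the derivative bound in the case $1 \le u < v$ is sound.

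What your version buys is economy and transparency. It needs neither Item~\ref{eq:obs1} nor the preliminary inequality $|x_a| < |x_b|$, and it makes clear that the only inputs are the conjugate-difference identity and the bound $2\sqrt{n} \ge 1$. The cost is an explicit case analysis on the common sign of $r, \overline{r}$ and on the parity of $a, b$. The paper's absolute-value formulation treats both signs uniformly. It also stays in the language of Item~\ref{eq:obs2}, which the paper reuses in Lemma~\ref{lemma:ext-base} and Theorem~\ref{thm:only-12}.
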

	\begin{proof}
		Recall $b > a$.
		Suppose for the sake of contradiction that $r\overline{r} > 0$.
		Item~\ref{eq:obs1} of Lemma~\ref{lemma:pre-base} then implies
		$|x_b|, |x_a| > \sqrt{n}$. In particular, the function
		\[ f(x) = \ln{\left\vert \frac{(x_a + x)^b}{(x_b + x)^a} \right\vert} =
			b\ln{|x_a + x|} - a\ln{|x_b + x|} \]
		is defined on the interval $[-\sqrt{n}, \sqrt{n}]$. 
		By Item~\ref{eq:obs2} of the lemma,
		$f(-\sqrt{n}) = f(\sqrt{n}) = 0$. The same item demands
		$|x_a| < |x_b|$, for otherwise
		$ |(x_b + \ipos{x_a} \sqrt{n})^a| \le (|x_b| + \sqrt{n})^a
			< (|x_a| + \sqrt{n})^b =
			|(x_a + \ipos{x_a} \sqrt{n})^b|$.
		On the other hand, $f$ is differentiable on $[ -\sqrt{n}, \sqrt{n} ]$
		with derivative $f'(x) = b/(x_a + x) - a/(x_b + x)$. A value of
		$x$ at which this vanishes must satisfy
		$0 = b(x_b + x) - a(x_a + x) = (bx_b - ax_a) + (b-a)x$.
		Therefore, using the reverse triangle inequality
		and the previous inequalities in this paragraph,
		\[ |x| = \left\vert \frac{bx_b - ax_a}{b-a} \right\vert
			\ge \frac{||bx_b| - |ax_a||}{|b-a|} =
			\frac{b|x_b| - a|x_a|}{b-a} > |x_a| >
			\sqrt{n}. \]
		It follows that $f$ is strictly increasing or decreasing
		on $[ -\sqrt{n}, \sqrt{n} ]$, contradicting
		the requirement that $f(-\sqrt{n}) = 0 = f(\sqrt{n})$.
	\end{proof}

	Before proceeding to the second part of the argument, it will also
	be necessary to eliminate some degenerate cases for $r$, and to place
	further conditions on $a$.

	\begin{lemma}\label{lemma:ext-base}
		Retain the hypotheses from Lemma~\ref{lemma:pre-base}.
		Then $r\overline{r} < 0$ and $r + \overline{r} \neq 0$.
		Furthermore, if $a, b$ are coprime, $a$ must be odd.
	\end{lemma}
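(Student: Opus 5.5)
Given Theorem~\ref{thm:base}, which gives $r\overline{r} \le 0$, the first claim reduces to ruling out $r\overline{r} = 0$. Since $\overline{\cdot}$ is a field automorphism, $r\overline{r} = 0$ forces $r = 0$. Then $x_a = \sqrt{n}$ would be rational, contradicting that $n$ is not a perfect square. Hence $r\overline{r} < 0$.

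For $r + \overline{r} \neq 0$, suppose instead that $\overline{r} = -r$. Then $r = t\sqrt{n}$ with $t \in \QQ$, and $t \neq 0$ by the previous step. Now $r^c = t^c n^{c/2}$ is rational when $c$ is even and a rational multiple of $\sqrt{n}$ when $c$ is odd. For $r^c + \sqrt{n}$ to be rational we need $c$ odd and $t^c n^{(c-1)/2} = -1$. Applying this to both $c = a$ and $c = b$, each must be odd, and $t^a n^{(a-1)/2} = t^b n^{(b-1)/2} = -1$. The condition forces $t < 0$. Taking absolute values gives $|t|^a n^{(a-1)/2} = |t|^b n^{(b-1)/2}$, so $(|t|\sqrt{n})^{b-a} = 1$. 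Hence $|t|\sqrt{n} = 1$, meaning $\sqrt{n} = 1/|t| \in \QQ$, a contradiction. Alternatively, I would use Lemma~\ref{lemma:pre-base} Item~\ref{eq:obs1}: here $r\overline{r} = -t^2 n$, so $x_c^2 = n + (-t^2n)^c$, and the even/odd bookkeeping gives the same conclusion.

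For the parity claim, assume $a, b$ are coprime and $a$ is even, so $b$ is odd. By Lemma~\ref{lemma:pre-base} Item~\ref{eq:obs1}, $x_a^2 - n = (r\overline{r})^a > 0$ because $a$ is even and $r\overline{r} \neq 0$. Likewise $x_b^2 - n = (r\overline{r})^b < 0$ because $b$ is odd and $r\overline{r} < 0$. So $|x_a| > \sqrt{n} > |x_b|$. Item~\ref{eq:obs2} gives $(x_a + \epsilon\sqrt{n})^b = (x_b + \epsilon\sqrt{n})^a$ for both signs $\epsilon = \pm 1$. Choose $\epsilon = \ipos{x_a}$. The left side then has absolute value $(|x_a| + \sqrt{n})^b$, while the right side has absolute value at most $(|x_b| + \sqrt{n})^a$. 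Comparing magnitudes via $|x_a| + \sqrt{n} > 2\sqrt{n} > |x_b| + \sqrt{n}$ is not immediately enough, since $a$ may exceed $b$.

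This comparison is the main obstacle. I would instead multiply the two sign versions of Item~\ref{eq:obs2}, which gives $(x_a^2 - n)^b = (x_b^2 - n)^a$. Its left side is positive and its right side is negative because $a$ is even. Equivalently, $(r\overline{r})^{ab}$ is computed both ways, so I should double-check the argument. A cleaner route is to note that $a$ even forces $r^a$ to be real and positive, and its conjugate $\overline{r}^a$ is also positive. Hence $x_a - \sqrt{n} > 0$ and $x_a + \sqrt{n} > 0$. For odd $b$, the product $r^b\overline{r}^b = (x_b - \sqrt{n})(x_b + \sqrt{n}) < 0$, so $x_b < \sqrt{n}$, and then $x_b - \sqrt{n} < 0$. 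Taking $k, \ell$ from B\'ezout as in Lemma~\ref{lemma:red}, we have $r = (x_a - \sqrt{n})^k (x_b - \sqrt{n})^\ell$, and $\ell$ must be odd because $ak + b\ell = 1$ with $a$ even. Combined with its conjugate, this yields $r\overline{r} = (x_a^2 - n)^k (x_b^2 - n)^\ell$. This is negative, consistent with the first claim, so it gives no contradiction by itself.

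The honest plan is therefore to exploit the positivity: $r^a > 0$ with $a$ even means $r$ is real (true by hypothesis) and $r\overline{r} < 0$. I would then reproduce the monotonicity argument of Theorem~\ref{thm:base} for $f$ on a suitable subinterval where both logarithms are defined, using $|x_a| > \sqrt{n}$ and $x_a + \sqrt{n}, x_a - \sqrt{n}$ of the same sign, to derive a contradiction. I expect this adaptation to be the crux.
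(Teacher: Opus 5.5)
Your arguments for $r\overline{r}<0$ and $r+\overline{r}\neq 0$ are correct and are essentially the paper's. The parity claim, however, is not proved, and this is a genuine gap. You set up the right magnitude comparison and then discarded it. You then took two detours that give no contradiction; in the first you assert that $(x_b^2-n)^a$ is negative, which is false because $a$ is even. You end with an unexecuted plan to adapt the monotonicity argument of Theorem~\ref{thm:base}.

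What you missed is a standing hypothesis. Lemma~\ref{lemma:pre-base} assumes $a<b$, so your worry that ``$a$ may exceed $b$'' is ruled out. That hypothesis must be used, because the claim is false without it: $(a,b)=(2,1)$ with $r=1/2-\sqrt{n}$ gives $r+\sqrt{n}=1/2$ and $r^2+\sqrt{n}=n+1/4$. Once you use $a<b$, together with $2\sqrt{n}>1$, the comparison you had already written down finishes the proof. Take $\epsilon=\ipos{x_a}$; since $|x_b|<\sqrt{n}<|x_a|$,
\[ |x_b+\epsilon\sqrt{n}|^a \le (|x_b|+\sqrt{n})^a < (2\sqrt{n})^a \le (2\sqrt{n})^b < (|x_a|+\sqrt{n})^b = |x_a+\epsilon\sqrt{n}|^b, \]
which contradicts Item~\ref{eq:obs2} of Lemma~\ref{lemma:pre-base}. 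This is exactly the paper's argument.
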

	\begin{proof}
		By Theorem~\ref{thm:base}, $r\overline{r} \le 0$.
		If $r \overline{r} = 0$, then $r = 0$, so that
		$r^a + \sqrt{n} = \sqrt{n}$ is irrational. Hence,
		$r \overline{r} < 0$. \\

		Now assume, for the sake of contradiction, that
		$r + \overline{r} = 0$. Thus $r \in \QQ(\sqrt{n})$
		has the form $r = v\sqrt{n}$ for some $v \in \QQ$.
		Consequently, for $c \in \{a, b\}$,
		$x_c = r^c + \sqrt{n} = v^c \sqrt{n}^c + \sqrt{n}$.
		Since $x_c$ and $v$ are rational, this can only hold if
		$c$ is odd; moreover, $v^c n^{(c-1)/2}$ must equal $-1$,
		lest $\sqrt{n} = x_c/(1+v^cn^{(c-1)/2})$. Therefore
		\[ 1 = \frac{v^b n^{(b-1)/2}}{v^a n^{(a-1)/2}} = (v \sqrt{n})^{b-a}. \]
		As $v \sqrt{n} \notin \QQ$ and $b > a$, this is again impossible. \\

		Finally, suppose that $a, b$ are coprime with $a$ even.
		Thus, $b > a$ is odd.
		Item~\ref{eq:obs1} of Lemma~\ref{lemma:pre-base} and
		the above conclusion that $r \overline{r} < 0$
		therefore imply $|x_b| < \sqrt{n} < |x_a|$; however,
		this yields
		$|(x_b + \ipos{x_a} \sqrt{n})^a| \le (|x_b| + |\ipos{x_a} \sqrt{n}|)^a
			< (2 \sqrt{n})^a \le (2 \sqrt{n})^b
			< |(x_a + \ipos{x_a} \sqrt{n})^b|$,
		which contradicts Item~\ref{eq:obs2} of the lemma. Hence,
		$a$ is odd as claimed.
	\end{proof}

	Let us now investigate the case $r \overline{r} < 0$.
	The centrepiece of the argument in this case is the observation
	that, for any non-zero $M \in \RR$, the sequence defined by
	\[ U_k = M^{k-1} \left(
		\frac{\overline{r}^k - r^k}{\overline{r} - r}
	\right) = \frac{(M \overline{r})^k - (Mr)^k}{M \overline{r} - Mr}
	\qquad \textrm{for all } k \ge 0 \]
	(assuming $\overline{r} \neq r$) satisfies the recurrence relation
	\[ U_0 = 0, \qquad U_1 = 1, \qquad U_{k+2} = M(r + \overline{r}) U_{k+1}
		-M^2 r\overline{r} U_k
		\textrm{ for all } k \ge 0. \]
	If $M$ is chosen so that the coefficients on the right-hand side are
	non-zero integers, this makes $U_k$ a Lucas sequence. Using this fact,
	along with the conclusions about $U_k$ that can be drawn
	from the properties of $r^c$ and $\overline{r}^c$
	(for $c \in \{ a, b \}$), it will be possible to determine $a$ and $b$.
	We begin by selecting $M$ and recording some observations on $U_k$.
	
	\begin{lemma}\label{lemma:luc-base}
		Retain the hypotheses from Lemma~\ref{lemma:pre-base}.
		Write $r = n_1/d_1 - n_2\sqrt{n}/d_2$
		with each $n_i/d_i$ a fraction in lowest terms, and define
		$P = 2d_2 |n_1|/\lambda$ and
		$Q = (n_1^2 d_2^2 - nn_2^2 d_1^2)/\lambda^2$,
		where $\lambda \in \ZZ^+$ is maximal\footnote{
			:~For completeness, set $\lambda = 1$
			if $P = Q = 0$, although it will be proved
			this does not occur anyway.
		} with respect
		to the property that $P$ and $Q$ are integers.
		Then $P > 0 > Q$.  Furthermore, for all $k \ge 0$,
		the Lucas sequence $(U_k (P,Q))$ satisfies
		\[ U_k = \left(
			\frac{d_1 d_2 \ipos{n_1}}{\lambda}
		\right)^{k-1} \left(
			\frac{\overline{r}^k - r^k}{\overline{r} - r}
		\right). \]
	\end{lemma}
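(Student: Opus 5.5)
We work with $r=n_1/d_1-n_2\sqrt{n}/d_2$, so that $\overline{r}=n_1/d_1+n_2\sqrt{n}/d_2$. Then
\[ r+\overline{r}=2n_1/d_1, \qquad r\overline{r}=\frac{n_1^2d_2^2-nn_2^2d_1^2}{d_1^2d_2^2}. \]
Set $M=d_1d_2\ipos{n_1}/\lambda$. We will check that the recurrence coefficients from the discussion preceding the lemma are exactly $P$ and $Q$:
\[ M(r+\overline{r})=\frac{2d_2\,\ipos{n_1}\,n_1}{\lambda}=\frac{2d_2|n_1|}{\lambda}=P, \qquad M^2r\overline{r}=\frac{n_1^2d_2^2-nn_2^2d_1^2}{\lambda^2}=Q. \]
The second identity uses $(\ipos{n_1})^2=1$.

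The sign conditions come from Lemma~\ref{lemma:ext-base}. Since $r+\overline{r}\neq 0$, we have $n_1\neq 0$, hence $P>0$. Since $r\overline{r}<0$, the numerator of $Q$ is negative, hence $Q<0$. In particular $P,Q$ are non-zero, so the footnote's degenerate case never arises. The maximal $\lambda$ exists: $\lambda=1$ already makes $P,Q$ integers, and any admissible $\lambda$ must divide $2d_2|n_1|$, which is non-zero. So $(U_k(P,Q))$ is a genuine Lucas sequence.

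We also need $\overline{r}\neq r$ for the displayed formula to make sense. This holds because $r\overline{r}<0$ forces $r\notin\QQ$, i.e.\ $n_2\neq 0$. It is then immediate that
\[ V_k=M^{k-1}\frac{\overline{r}^k-r^k}{\overline{r}-r} \]
satisfies $V_0=0$ and $V_1=1$. Moreover, $r$ and $\overline{r}$ are the roots of $t^2-(r+\overline{r})t+r\overline{r}$, so $Mr$ and $M\overline{r}$ are the roots of $t^2-Pt+Q$. Therefore $V_{k+2}=PV_{k+1}-QV_k$, and induction gives $V_k=U_k(P,Q)$ for all $k\ge 0$.

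The only point needing care is the bookkeeping with $\lambda$: the same $\lambda$ must work simultaneously in $P$ (to the first power) and in $Q$ (squared), matching $M$ and $M^2$. This is handled by the single choice of $M$ above. Beyond that, the proof is routine algebra.
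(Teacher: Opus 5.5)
Your proposal is correct and follows essentially the same route as the paper. Both take $M = d_1 d_2 \ipos{n_1}/\lambda$ so that $P = M(r+\overline{r})$ and $Q = M^2 r\overline{r}$, get $P>0>Q$ from Lemma~\ref{lemma:ext-base}, and get $\overline{r}\neq r$ from $r\overline{r}<0$. Your only additions are the explicit existence argument for the maximal $\lambda$ and the explicit characteristic-polynomial verification of the recurrence, both of which the paper leaves to the remarks preceding the lemma.
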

	\begin{proof}
		Writing $M = d_1 d_2 \ipos{n_1}/\lambda$, it is
		easily verified that $P = M(r + \overline{r})$ and
		$Q = M^2 r\overline{r}$. The first two claims
		of Lemma~\ref{lemma:ext-base} therefore establish that
		$P \neq 0$ and $Q < 0$; since $P = 2d_2 |n_1|/\lambda \ge 0$,
		this completes the first part of the proof. The remainder
		follows from the remarks preceding the lemma:
		$\overline{r} \neq r$, for otherwise $r\overline{r} = r^2$
		would be non-negative, contrary to Lemma~\ref{lemma:ext-base}.
	\end{proof}

	The substance of the argument is now presented in a theorem and
	a preliminary lemma. The latter depends crucially on an additional
	assumption that $a > 1$, leaving $a = 1$ to be treated
	separately in the body of the theorem. The conclusion of
	Lemma~\ref{lemma:ext-base} that $a$ is odd for $a, b$ coprime
	also plays a role in both proofs, as does the fact that
	$\binom{b}{i} = b\binom{b-1}{i-1}/i$ noted in \cite{sols}.
	
	\begin{lemma}\label{lemma:div-b}
		Retain the hypotheses from Lemma~\ref{lemma:luc-base},
		including the definition of $U_k$. Additionally, suppose that
		$a, b$ are coprime with $a > 1$. Then each prime $p$ that
		divides $U_b$ also divides $n_2$. Moreover,
		$\nu_p (U_b) \le \nu_p(b)$ for every such prime, and so
		$U_b \mid b$.
	\end{lemma}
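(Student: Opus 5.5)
The plan rests on the explicit value of $U_b$ (and $U_a$) forced by the rationality hypotheses, followed by a prime-by-prime analysis. Since $r^c = x_c - \sqrt{n}$ and $\overline{r}^c = x_c + \sqrt{n}$ for $c \in \{a,b\}$, we have $\overline{r}^c - r^c = 2\sqrt{n}$. With $r = n_1/d_1 - n_2\sqrt{n}/d_2$, we also have $\overline{r} - r = 2n_2\sqrt{n}/d_2$. Hence Lemma~\ref{lemma:luc-base} gives, with $M = d_1d_2\ipos{n_1}/\lambda$,
\[ U_c = M^{c-1}\,\frac{d_2}{n_2} \quad (c \in \{a,b\}), \qquad\text{so}\qquad U_b = M^{b-a}U_a . \]
Both $U_a$ and $U_b$ are integers, and $U_1 = 1$ with $\gcd(a,b) = 1$.

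\emph{Regular primes.} Let $p$ be a regular prime with $p \mid U_b$. Theorem~\ref{thm:div-reg} gives $\nu_p(\gcd(U_a,U_b)) = \nu_p(U_1) = 0$, so $\nu_p(U_a) = 0$. Then $(b-a)\nu_p(M) = \nu_p(U_b) > 0$, so $\nu_p(M) > 0$. Also $0 = \nu_p(U_a) = (a-1)\nu_p(M) + \nu_p(d_2) - \nu_p(n_2)$. Because $a > 1$, this forces $\nu_p(n_2) > 0$. This is exactly where $a > 1$ is used.

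\emph{Special primes.} By maximality of $\lambda$, a special prime cannot have $p^2 \mid Q$: otherwise $\lambda$ could be replaced by $p\lambda$. Hence $\nu_p(Q) = 1$. Since $a$ is odd (Lemma~\ref{lemma:ext-base}), Corollary~\ref{corr:div-spec} gives $\nu_p(U_a) = (a-1)/2$ and $\nu_p(U_b) \ge \lfloor b/2 \rfloor$. I would then compare $\nu_p$ on both sides of $U_a = M^{a-1}d_2/n_2$ using $\nu_p(M) = \nu_p(d_1d_2/\lambda)$ and $p \mid P = 2d_2|n_1|/\lambda$. The aim is to show that $p \nmid n_2$ would make the valuations incompatible. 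Concretely, $p \nmid n_2$ should force $\nu_p(M)$ to satisfy both $(a-1)\nu_p(M) + \nu_p(d_2) = (a-1)/2$ and the relation $U_b = M^{b-a}U_a$. I expect this to leave no consistent value, possibly after splitting on whether $p \mid d_1d_2/\lambda$.

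\emph{The bound $\nu_p(U_b) \le \nu_p(b)$.} Expand $\overline{r}^b - r^b$ binomially and divide by $\overline{r} - r$. Combined with the value of $U_b$ above, this yields the rational identity
\[ 1 = \sum_{i \text{ odd}} \binom{b}{i} \Big(\frac{n_1}{d_1}\Big)^{b-i}\Big(\frac{n_2}{d_2}\Big)^{i} n^{(i-1)/2} . \]
Rewrite each coefficient as $\binom{b}{i} = b\binom{b-1}{i-1}/i$. The $i=1$ term is $b(n_1/d_1)^{b-1}n_2/d_2$. Each term with $i \ge 3$ carries the extra factor $(n_2/d_2)^{i-1}$ and loses at most $\nu_p(i) < i-1$ to the denominator $i$. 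So for $p \mid n_2$ (and hence $p \nmid d_2$), the higher terms are $p$-adically dominated by a multiple of the first. Multiplying through by $M^{b-1}d_2/n_2$ converts this into $U_b = b\cdot(\text{something } p\text{-integral})$, or the reverse inequality, giving $\nu_p(U_b) \le \nu_p(b)$.

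Since every prime divisor of $U_b$ divides $n_2$ and satisfies this bound, and $U_b > 0$ by Lemma~\ref{lemma:luc-size}, it follows that $U_b \mid b$. The main obstacle is this last valuation comparison: the powers of $d_1$ and $\lambda$ in $M$, and the case $p = 2$ (where the factor $2$ in $P$ and $\nu_2(i)$ interact), need care to ensure the $i = 1$ term genuinely controls the sum.
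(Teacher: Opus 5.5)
Your outline follows the paper's proof: the formula $U_c = M^{c-1}d_2/n_2$, the regular/special split via Theorem~\ref{thm:div-reg} and Corollary~\ref{corr:div-spec}, and a binomial expansion using $\binom{b}{i} = b\binom{b-1}{i-1}/i$. Your regular-prime case is complete. The special-prime case is left as an expectation, but it closes in one line with no case split. Since $a$ is odd and $b > a$, we have $\nu_p(U_b) \ge \lfloor b/2 \rfloor \ge (a+1)/2 > (a-1)/2 = \nu_p(U_a)$. So $U_b = M^{b-a}U_a$ forces $\nu_p(M) \ge 1$, and then $\nu_p(n_2) - \nu_p(d_2) = (a-1)\nu_p(M) - (a-1)/2 \ge (a-1)/2 > 0$.

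\textbf{The genuine gap is in the final bound.} Write $T_i$ for the $i$-th term of your identity $1 = \sum_i T_i$. Then
\[ T_i/T_1 = \tfrac{1}{i}\tbinom{b-1}{i-1}\,(d_1/n_1)^{i-1}(n_2/d_2)^{i-1}n^{(i-1)/2}. \]
You dropped the factor $(d_1/n_1)^{i-1}$, which has negative $p$-adic valuation if $p \mid n_1$. So the claimed dominance of the $i=1$ term is not justified as written.

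The same fact governs the direction of the inequality. Dominance gives $\nu_p(T_1) = 0$, hence $\nu_p(U_b) = \nu_p(b) + (b-1)\nu_p(d_2|n_1|/\lambda)$. This is $\le \nu_p(b)$ only because $p \nmid n_1 d_2$. A statement of the form ``$U_b = b$ times something $p$-integral'' would give $\nu_p(U_b) \ge \nu_p(b)$, which is the wrong direction.

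The missing step is $p \nmid n_1$. The paper obtains it by taking valuations in $\lambda^{b-1}U_b n_2 = \pm(d_1d_2)^{b-1}d_2$. Since $p \mid U_b$ and $p \nmid d_2$, this gives $(b-1)\nu_p(d_1) > 0$, so $p \mid d_1$. In your setup it is even quicker: if $p$ divided both $n_1$ and $n_2$, every $T_i$ would have $\nu_p(T_i) \ge i \ge 1$, contradicting $\sum_i T_i = 1$.

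Once $p \nmid n_1$ is known, $\nu_p(T_i) - \nu_p(T_1) \ge (i-1) - \nu_p(i) > 0$ for odd $i \ge 3$. Hence $\nu_p(U_b) = \nu_p(b) - (b-1)\nu_p(\lambda) \le \nu_p(b)$, exactly as in the paper. Your concern about $p = 2$ is unnecessary: only odd $i$ occur, so $\nu_2(i) = 0$, and this formula holds uniformly in $p$.
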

	\begin{proof}
		Write $M = d_1 d_2 \ipos{n_1} / \lambda \in \QQ$
		as in the proof of Lemma~\ref{lemma:luc-base} for brevity.
		Also, observe that for $c \in \{a, b\}$,
		
		\begin{align}
			U_c & = M^{c-1} \left(
				\frac{\overline{r}^c - r^c}{\overline{r} - r}
			\right) \nonumber
			= M^{c-1} \left(
				\frac{x_c + \sqrt{n} - (x_c - \sqrt{n})}{
					2n_2\sqrt{n}/d_2
				}
			\right) \nonumber \\
			& = M^{c-1} \left(
				\frac{2d_2\sqrt{n}}{2n_2\sqrt{n}}
			\right)
			= M^{c-1} d_2/n_2 \label{eq:Uc}
		\end{align}
				
		We first prove the claim that all prime divisors $p$ of $U_b$
		divide $n_2$, distinguishing two cases. \\

		If $p \mid \gcd{(P,Q)}$, then $\nu_p (Q) = 1$, lest
		$\lambda$ be replaceable by $\lambda p > \lambda$
		in the statement of Lemma~\ref{lemma:luc-base}.
		Corollary~\ref{corr:div-spec} hence implies that
		$\nu_p (U_k) \ge \lfloor k/2 \rfloor$,
		with equality for odd $k$. In particular, recalling
		from Lemma~\ref{lemma:ext-base} that coprimality of $a$ and
		$b > a$ requires odd $a$, $\nu_p (U_b) \ge
		\lfloor b/2 \rfloor > \lfloor a/2 \rfloor = \nu_p (U_a)$.
		It follows from Equation~\ref{eq:Uc} that $\nu_p (M) > 0$,
		and consequently that $\nu_p (n_2) = (a-1) \nu_p (M) +
			\nu_p(d_2) - \nu_p(U_a) \ge a - 1 - a/2 > 0$
		as sought. \\

		By contrast, if $p \nmid \gcd{(P,Q)}$, Theorem~\ref{thm:div-reg}
		and coprimality of $a, b$ yield $\nu_p (\gcd{(U_a, U_b)}) =
		\nu_p(U_{\gcd{(a, b)}}) = \nu_p(U_1) = 0$. Since $p$ divides
		$U_b$, this implies $\nu_p (U_a) = 0 < \nu_p(U_b)$; it again
		follows from Equation~\ref{eq:Uc} that $\nu_p (M) > 0$.
		As before, $\nu_p (n_2) = (a-1) \nu_p(M) + \nu_p(d_2) -
			\nu_p(U_a) \ge a-1 > 0$, the desired result. \\

		Having proved $p \mid n_2$, we proceed to demonstrate
		that $\nu_p (U_b) \le \nu_p(b)$. By Lemma~\ref{lemma:luc-base},

		\begin{align*}
			U_b & = \left(
				\frac{d_1 d_2 \ipos{n_1}}{\lambda}
			\right)^{b-1} \left(
				\frac{\overline{r}^b - r^b}{\overline{r} - r}
			\right) \\
			& = \left(
				\frac{\ipos{n_1}}{\lambda}
			\right)^{b-1} \left( \frac{
				(n_1 d_2 + n_2 d_1 \sqrt{n})^b -
					(n_1 d_2 - n_2 d_1 \sqrt{n})^b
			}{2n_2 d_1 \sqrt{n}} \right) \\
			& = \left(
				\frac{\ipos{n_1}}{\lambda}
			\right)^{b-1} \sum_{
				\substack{i=0 \\ i \textrm{ odd}}
			}^b \binom{b}{i} \frac{
				(n_1 d_2)^{b-i} (n_2 d_1 \sqrt{n})^i
			}{n_2 d_1 \sqrt{n}} \\
			& = \left(
				\frac{\ipos{n_1}}{\lambda}
			\right)^{b-1} \sum_{
				\substack{i=0 \\ i \textrm{ odd}}
			}^b \binom{b}{i} (n_1 d_2)^{b-i}
				(n_2 d_1)^{i-1} n^{(i-1)/2}.
		\end{align*}

		Now, each $n_i$ is coprime to the corresponding $d_i$
		by construction, so that $p \nmid d_2$; manipulation of
		Equation~\ref{eq:Uc} (with $c = b$) in turn reveals
		\[0 < (b-1) \nu_p (\lambda) +
			\nu_p (U_b n_2) = (b-1) \nu_p (d_1 d_2 \ipos{n_1}) +
			\nu_p(d_2) = (b-1) \nu_p (d_1), \]
		from which it follows
		that $p \mid d_1$ and hence $p \nmid n_1$. Each term
		in the sum on the right-hand side of the above equation
		therefore has a $p$-adic valuation of
		$\nu_p (\binom{b}{i} (n_1 d_2)^{b-i}
			(n_2 d_1)^{i-1} n^{(i-1)/2}) =
			\nu_p( b \binom{b-1}{i-1}/i ) +
			(i-1) \nu_p(n_2 d_1) + (i-1) \nu_p(n)/2 \ge
			\nu_p(b) - \nu_p(i) + i-1 \ge \nu_p (b)$,
		with equality precisely when $i = 1$. The sum itself
		therefore has a $p$-adic valuation of $\nu_p (b)$.
		Consequently, $\nu_p (U_b) = \nu_p (b) -
			(b-1) \nu_p (\lambda) \le \nu_p (b)$
		as claimed; the remainder of the lemma is immediate.
	\end{proof}

	\begin{thm}\label{thm:only-12}
		Retain the hypotheses from Lemma~\ref{lemma:luc-base},
		including the definition of $U_k$, and suppose additionally that
		$a, b$ are coprime. Then $a = 1$ and $b = 2$.
	\end{thm}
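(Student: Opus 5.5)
We split into the cases $a > 1$ and $a = 1$. Throughout, $a$ is odd by Lemma~\ref{lemma:ext-base}, and $P > 0 > Q$ by Lemma~\ref{lemma:luc-base}.

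\emph{Case $a > 1$.} Lemma~\ref{lemma:div-b} gives $U_b \mid b$, so $|U_b| \le b$. Since $P > 0 > Q$, every $U_k$ is a positive integer, so $U_b \le b$. We have $b > a \ge 3$ with $\gcd(a, b) = 1$, which forces $b \ge 4$. By Lemma~\ref{lemma:luc-size}, $U_b > b$ whenever $b > 5$, and also whenever $b > 3$ provided $P > 1$ or $Q < -1$. Either conclusion contradicts $U_b \le b$. The only configurations not yet excluded are $b \in \{4, 5\}$ with $P = 1$ and $Q = -1$, that is, the Fibonacci sequence, where $U_4 = 3$ and $U_5 = 5$. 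Coprimality with odd $a \ge 3$ leaves $(a, b) = (3, 4)$ or $(3, 5)$. For these we use equation~\eqref{eq:Uc}: $U_a = M^{a-1} d_2/n_2$ and $U_b = M^{b-1} d_2/n_2$, so $U_b/U_a = M^{b-a}$, with $U_3 = 2$. For $(3,4)$ this gives $M = 3/2$, and for $(3,5)$ it gives $M^2 = 5/2$, which is impossible for rational $M$. In the case $M = 3/2$, $Q = M^2 r\overline{r} = -1$ and $P = M(r + \overline{r}) = 1$. Hence $r, \overline{r}$ are the roots of $t^2 - \tfrac{2}{3}t - \tfrac{4}{9}$, namely $r = (1 \pm \sqrt{5})/3$. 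This forces $n = 5$ up to squares. We would then check directly that $r^3 + \sqrt{5} \notin \QQ$; the explicit computation, or a sign or magnitude check via Lemma~\ref{lemma:pre-base}, should dispose of it. So $a > 1$ is impossible.

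\emph{Case $a = 1$.} Here $r = x_1 - \sqrt{n}$, so $n_2/d_2 = 1$, giving $d_2 = n_2 = 1$, and $r \overline{r} = x_1^2 - n < 0$. Equation~\eqref{eq:Uc} gives $U_1 = 1$, which is consistent, and $U_b = M^{b-1}$. Since $U_b$ is an integer and $M$ is rational, $M^{b-1} \in \ZZ$ forces $M \in \ZZ$. We have $P = 2|n_1|/\lambda$ and $Q = (n_1^2 - n d_1^2)/\lambda^2$. Writing $r = x_1 - \sqrt{n}$, we expand $r^b = \sum_i \binom{b}{i} x_1^{b-i}(-\sqrt{n})^i$. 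Requiring the coefficient of $\sqrt{n}$ to equal $-1$ gives
\[
\sum_{i \text{ odd}} \binom{b}{i} x_1^{b-i} n^{(i-1)/2} = 1 .
\]
We would show this forces $b = 2$. For $b = 2$ it reads $2x_1 = 1$, matching the example $r = 1/2 - \sqrt{n}$. For $b \ge 3$, clearing denominators with $x_1 = n_1/d_1$, the $i = b$ or $i = b-1$ term dominates $p$-adically at primes dividing $d_1$. Alternatively, and perhaps more cleanly, we compare $U_b = M^{b-1}$ with the growth of $U_b$. Every prime $p \mid M$ must divide $d_1$ and not $n_1$, and then $p \nmid P$ unless $p = 2$, so $p$ is regular. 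For a regular prime, $U_b \equiv$ (something coprime to $p$) modulo $p$ in the appropriate sense: $U_b \equiv P^{b-1} \pmod{Q}$-type congruences show that $p \mid U_b$ only if $p \mid Q$-related quantities. This would force $M = \pm 1$ or $M$ a power of $2$, after which $U_b = M^{b-1}$ together with Lemma~\ref{lemma:luc-size} pins down $b = 2$.

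The main obstacle is this case $a = 1$. The key claim is that $U_b$ being an exact $(b-1)$-st power of $M$ is incompatible with Lucas growth for $b \ge 3$. I would first attempt the direct $2$-adic and $p$-adic analysis of the displayed identity above, treating $p \mid d_1$ separately for $p = 2$ and $p$ odd, as in the proof of Lemma~\ref{lemma:div-b}.
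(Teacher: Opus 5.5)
Your case $a>1$ follows the paper's argument, but the case $a=1$, which you yourself call the crux, is not proved, and both routes you sketch fail as stated.

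\emph{First route.} Take a prime $p$ with $\nu_p(x_1)<0$. The term of least valuation in $\sum_{i \text{ odd}} \binom{b}{i} x_1^{b-i} n^{(i-1)/2} = 1$ is the $i=1$ term $b x_1^{b-1}$, since it carries the highest power of $x_1$. It is not the $i=b$ or $i=b-1$ term. You also never show that such a prime exists, i.e.\ that $x_1 \notin \ZZ$.

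\emph{Second route.} An odd prime $p \mid M$ divides $d_1$ but not $n_1$. Hence $p \nmid \lambda^2 Q = n_1^2 - nd_1^2$, and the congruence $U_b \equiv P^{b-1} \pmod{Q}$ says nothing at $p$. In fact $p \mid M \mid U_b$, so no non-divisibility statement about $p$ and $U_b$ can hold. What is true is that $p^2$ divides the discriminant $P^2-4Q = 4nd_1^2/\lambda^2$, which gives $\nu_p(U_b)=\nu_p(b)$. That is a valuation comparison, which is the real content you need. The prime $p=2$ is left open: for $M=\pm 2^e$ with $e\ge 1$, Lemma~\ref{lemma:luc-size} gives only the lower bound $U_b>b$, which is fully compatible with $U_b = 2^{e(b-1)}$.

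The missing ingredients are the paper's two steps.

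\emph{Archimedean step.} For odd $i$ we have $b-i\equiv b-1 \pmod 2$, so every term of the sum has the sign of $x_1^{b-1}$. Hence $b|x_1|^{b-1}\le 1$, so $|x_1|<1$. Since $x_1=(r+\overline{r})/2\neq 0$ by Lemma~\ref{lemma:ext-base}, some prime $p$ has $\nu_p(x_1)<0$.

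\emph{$p$-adic step, uniform in $p$ (including $p=2$).} Using $\binom{b}{i}=b\binom{b-1}{i-1}/i$, the $i$-th term has valuation at least $\nu_p(b)+(b-1)\nu_p(x_1)+(i-1)(-\nu_p(x_1))-\nu_p(i)$. For odd $i\ge 3$ this strictly exceeds the valuation of the $i=1$ term, because $i-1>\nu_p(i)$. Therefore $0=\nu_p(b)+(b-1)\nu_p(x_1)$. This gives $b\ge p^{\nu_p(b)}\ge 2^{b-1}$, hence $b\le 2$.

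A minor point in the case $a>1$: for $(a,b)=(3,4)$ with $P=-Q=1$, you weakened $U_b\mid b$ to $U_b\le b$. But $U_4=3\nmid 4$ already contradicts Lemma~\ref{lemma:div-b}, so your unfinished detour through $M=3/2$ is unnecessary. It would close anyway: $r^3=(16\pm 8\sqrt{5})/27$, while $\sqrt{n}$ is a positive integer multiple of $\sqrt{5}$.
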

	\begin{proof}
		Suppose firstly that $a > 1$. Lemma~\ref{lemma:ext-base}
		and coprimality of $a, b$ force $a$ to be odd, implying
		$a \ge 3$ and thus $b \ge 4$. It follows
		by Lemma~\ref{lemma:luc-size} that either $U_b (P, Q) > b$
		or $P = -Q = 1$ and $b \in \{ 4, 5 \}$. The former contradicts
		the conclusion $U_b \mid b$ of Lemma~\ref{lemma:div-b},
		as does $P = -Q = 1$ and $b = 4$, so that $b = 5$ and
		$a = 3$ with $P = -Q = 1$ is the only possibility.
		In this situation, however, Equation~\ref{eq:Uc} of
		Lemma~\ref{lemma:div-b} provides the contradiction
		\[ \sqrt{\frac{5}{2}} = \left( \frac{U_5}{U_3} \right)^{1/(5-3)} =
			\left\vert \frac{d_1 d_2 \ipos{n_1}}{\lambda} \right\vert
			\in \QQ. \]

		Hence $a = 1$ and $b > a = 1$. To establish
		that $b = 2$, it will suffice to employ a variant of
		the argument used in the second half of the proof of
		Lemma~\ref{lemma:div-b}. Per Item~\ref{eq:obs2} of
		Lemma~\ref{lemma:pre-base}, we have
		\[ 1 = \frac{(x_b + \sqrt{n}) - (x_b - \sqrt{n})}{2\sqrt{n}} =
			\frac{(x_a + \sqrt{n})^b -
				(x_a - \sqrt{n})^b}{2\sqrt{n}} = \sum_{
				\substack{i=0 \\ i \textrm{ odd}}
			}^b \binom{b}{i} x_a^{b-i} n^{(i-1)/2}
		\]
		with $x_a, x_b \in \QQ$. The fact that $i$ is odd and $n > 0$
		means each term of this sum has the same sign, and the magnitude
		of the term with $i = 1$ is $|bx_a^{b-1}|$. The above equality
		thus demands $|x_a| < 1$; in particular, either $x_a = 0$ ---
		contrary to the result of Lemma~\ref{lemma:ext-base} that
		$r + \overline{r} \neq 0$ --- or $\nu_p (x_a) < 0$
		for some prime $p$. For such $p$, each term in the sum has
		a $p$-adic valuation of
		$\nu_p (\binom{b}{i} x_a^{b-i}
			n^{(i-1)/2}) =
			\nu_p( b \binom{b-1}{i-1}/i ) +
			(b-i) \nu_p(x_a) + (i-1) \nu_p(n)/2 \ge
			\nu_p(b) - \nu_p(i) + (i-1) \cdot {-\nu_p (x_a)} +
				(b-1)\nu_p (x_a) \ge
			\nu_p (b) + (b-1) \nu_p(x_a)$,
		with equality precisely when $i = 1$. The $p$-adic valuation
		of the sum itself is therefore given by both
		$\nu_p (b) + (b-1) \nu_p(x_a)$ and $\nu_p (1) = 0$, so that
		$b \ge 2^{\nu_p (b)} = 2^{(b-1) \cdot {-\nu_p (x_a)}} \ge 2^{b-1}$;
		this holds only when $b \le 2$.
	\end{proof}

	Theorem~\ref{thm:main} now follows from Lemmas~\ref{lemma:red}
	and~\ref{lemma:ext-base}, and Theorem~\ref{thm:only-12}.

	\begin{small}
	
	\end{small}

\begin{thebibliography}{9}
	
	\bibitem{lucas}
	C.~J.--C.\ Ballot and H.~C.\ Williams.
	\newblock {\em The Lucas Sequences: Theory and Applications}.
	\newblock CMS/CAIMS Books in Mathematics, Springer Nature, Switzerland, 2023.

	\bibitem{paper}
	Simon Marais Mathematics Competition.
	\newblock {\em 2023 Paper B}, 2023. \\
	\newblock \url{https://www.simonmarais.org/uploads/8/2/3/5/82358688/smmc2023_paper_b.pdf}

	\bibitem{sols}
	Simon Marais Mathematics Competition.
	\newblock {\em 2023 Solutions}, pp.\ 14--16, 2023. \\
	\newblock \url{https://www.simonmarais.org/uploads/8/2/3/5/82358688/smmc2023solutions.pdf}

	\bibitem{smmc}
	Simon Marais Mathematics Competition.
	\newblock {\em Background}.
	\newblock Online, accessed 2026. \\
	\newblock \url{https://www.simonmarais.org/background.html}

	\end{thebibliography}
\end{document}